\newtheorem{theorem}{Theorem}[section]
\newtheorem{lemma}[theorem]{Lemma}
\newtheorem{corollary}[theorem]{Corollary}
\theoremstyle{definition}
\theoremstyle{remark}
\newtheorem{remark}[theorem]{Remark}
\numberwithin{equation}{section}
\newcommand{\C}{\mathbb{C}}
\newcommand{\D}{\mathbb{D}}
\newcommand{\N}{\mathbb{N}}
\newcommand{\T}{\mathbb{T}}
\newcommand{\Z}{\mathbb{Z}}
\newcommand{\mcB}{\mathcal{B}}
\newcommand{\mcF}{\mathcal{F}}
\newcommand{\mrmT}{\mathrm{T}}
\DeclareMathOperator{\spn}{\mathrm{sp}}
\DeclareMathOperator{\cspn}{\overline{\mathrm{sp}}}
\begin{document}

\title[The intersection of past and future]
{The intersection of past and future for multivariate stationary processes}


\author[A.\ Inoue]{Akihiko Inoue}
\address{Department of Mathematics, Hiroshima University,
Higashi-Hiroshima 739-8526, Japan}
\email{inoue100@hiroshima-u.ac.jp}

\author[Y.\ Kasahara]{Yukio Kasahara}
\address{Department of Mathematics, Hokkaido University,
Sapporo 060-0811, Japan}
\email{y-kasa@math.sci.hokudai.ac.jp}

\author[M.\ Pourahmadi]{Mohsen Pourahmadi}
\address{Department of Statistics,
Texas A\&M University,
College Station, TX 77843, USA}
\email{pourahm@stat.tamu.edu}
\thanks{Mohsen Pourahmadi was supported by the NFS grant DMS-1309586.}

\subjclass[2010]{Primary 60G10; secondary 62M10, 60G25.}




\begin{abstract}
We consider an intersection of past and future property of
multivariate stationary processes which  is the key to
deriving various representation theorems for their linear predictor coefficient matrices. 
We extend useful spectral characterizations for 
this property from univariate processes to multivariate processes.
\end{abstract}

\maketitle



\section{Introduction}\label{sec:1}

We write $\C^{m\times n}$ for the set of all complex $m\times n$
matrices.
Let $\{X(k):k\in\Z\}$ be a $\C^{q\times 1}$-valued, centered,
weakly stationary process, defined on a probability space $(\Omega, \mcF, P)$, which
we shall simply call a {\em $q$-variate stationary process}.
Write $X(k)=(X_1(k),\dots,X_q(k))^{\mrmT}$, and
let $M$ be the complex Hilbert space
spanned by all the entries $\{X_j(k): k\in\Z,\ j=1,\dots,q\}$ in $L^2(\Omega, \mcF, P)$,
which has inner product $(Y_1, Y_2)_{M}:=E[Y_1\overline{Y_2}]$ and
norm $\Vert Y\Vert_{M}:=(Y,Y)_{M}^{1/2}$.
For $I\subset \Z$ such as $\{n\}$,
$(-\infty,n]:=\{n,n-1,\dots\}$, $[n,\infty):=\{n,n+1,\dots\}$,
and $[m,n]:=\{m,\dots,n\}$ with $m\le n$,
we define the closed subspace $M_I^X$ of $M$ by
\[
M_I^X:=\cspn \{X_j(k): j=1,\dots,q,\ k\in I\}.
\]
Notice that $M_{[n,n]}^X=M_{\{n\}}^X=\spn\{X_1(n),\dots,X_q(n)\}$.

In this paper, we are concerned with the following {\em intersection of past and future}
property of a $q$-variate stationary process $\{X(k)\}$:
\begin{equation}
M_{(-\infty,-1]}^X\cap M_{[-n,\infty)}^X=M_{[-n,-1]}^X,\qquad n=1,2,3,\dots.
\tag{\rm IPF}
\end{equation}
It is shown in \cite[Theorem 3.1]{I1} that a univariate stationary process satisfies
(IPF) if it is purely nondeterministic (PND) (see Section \ref{sec:2} below) and
has spectral density $w$ such that
$w^{-1}$ is integrable.
We prove a multivariate analog of this sufficient condition for (IPF).
More precisely,
we show that a $q$-variate stationary process $\{X(k)\}$ satisfies
(IPF) if $\{X(k)\}$ has maximal rank (see Section \ref{sec:2} below)
and has spectral density $w$ such that
$w^{-1}$ is integrable (see Corollary \ref{cor:B} below).
We remark that such a process $\{X(k)\}$ is PND.

The importance of (IPF) for univariate stationary processes is that it,
combined with von Neumann's Alternating
Projection Theorem (cf.\ \cite[\S 9.6.3]{P}), allows one to derive explicit and useful representations of
finite-past prediction error variances (\cite{I1, I2, IK1}),
finite-past predictor coefficients (\cite{IK2}), and partial autocorrelations 
or Verblunsky coefficients
(\cite{I3, BIK, KB}), of $\{X(k)\}$.
We can extend this approach introduced by \cite{I1} to multivariate stationary processes.
In so doing, the sufficient condition for (IPF) stated above plays a crucial role. 
In our subsequent work, under the (IPF) condition and using an argument 
which involves the Alternating Projection Theorem, we extend 
various known univariate representations for the finite-past prediction error variances, 
finite-past predictor
coefficients, and partial autocorrelations to the multivariate setting.

The property (IPF) is closely related to the property
\begin{equation}
M_{(-\infty,-1]}^X\cap M_{[0,\infty)}^X=\{0\},
\tag{\rm CND}
\end{equation}
 called {\em complete nondeterminism} by Sarason \cite{S}.
Pointing out that the essence of a spectral characterization of
CND processes had been given by
Levinson and McKean \cite{LM},
Bloomfield et al.~\cite{BJH} considered various characterizations of univariate CND
processes.
For univariate stationary processes, the equivalence
(CND) $\Leftrightarrow$ (PND) $+$ (IPF) holds (see \cite[Theorem 2.3]{IK2}).
For $q$-variate processes, this equivalence is not necessarily true (see Remark \ref{rem:cnd665}
below). The main theorem of this paper is the equivalence between (IPF) and (CND) and their 
spectral characterizations similar to the univariate ones stated above, under the assumption that 
$\{X(k)\}$ is PND and has maximal rank (see Theorems \ref{thm:A} below). 
We prove the above sufficient condition for (IPF) that 
$w^{-1}$ is integrable as a simple corollary of this theorem. 
We also show an example of $\{X(k)\}$ with (IPF) for which $w^{-1}$ is not integrable, 
as another corollary of this theorem.


\section{Preliminaries}\label{sec:2}

As stated in Section \ref{sec:1}, let $\C^{m\times n}$ be the set of all complex $m\times n$
matrices, and $I_n$ the $n\times n$ unit matrix.
For $A\in \C^{m\times n}$, we denote by $A^{\mrmT}$ the transpose of $A$, and by
$\bar{A}$ and
$A^*$ the complex and Hermitian conjugates of $A$, respectively.
Thus $A^*:=\bar{A}^{\mrmT}$.

Let $\T$ be the unit circle in $\C$, i.e.,
$\T:=\{z\in\C :\vert z\vert=1\}$.
We write $\sigma$ for the normalized Lebesgue measure $d\theta/(2\pi)$ on
$([-\pi,\pi), \mcB([-\pi,\pi)))$, where $\mcB([-\pi,\pi))$ is the Borel $\sigma$-algebra
of $[-\pi,\pi)$. Thus we have $\sigma([-\pi,\pi))=1$.
For $p\in [1,\infty)$,
we write $L_p(\T)$ for the Lebesgue space of measurable functions $f:\T\to\C$
such that $\Vert f\Vert_p<\infty$, where
\[
\Vert f\Vert_p:=\left\{\int_{-\pi}^{\pi}\vert f(e^{i\theta})\vert^p\sigma(d\theta)\right\}^{1/p}.
\]
Let $L_p^{m\times n}(\T)$ be the space of $\C^{m\times n}$-valued functions on
$\T$ whose entries belong to $L_p(\T)$.

For $p\in [1,\infty)$, the Hardy class $H_p(\T)$ on $\T$ is the closed subspace of
$L_p(\T)$ defined by
\[
H_p(\T):=\left\{f\in: L_p(\T): \int_{-\pi}^{\pi}e^{im\theta}f(e^{i\theta})\sigma(d\theta)=0\
\mbox{for}\ m=1,2,\dots\right\}.
\]
Let $H_p^{m\times n}(\T)$ be the space of $\C^{m\times n}$-valued functions on
$\T$ whose entries belong to $H_p(\T)$.
Let $\D$ be the unit open disk in $\C$, i.e.,
$\D:=\{z\in\C : \vert z\vert<1\}$.
For $p\in [1,\infty)$, we write $H_p(\D)$ for the Hardy class on $\D$, consisting of
holomorphic functions $f$ on $\D$ such that
\[
\sup_{r\in [0,1)}\int_{-\pi}^{\pi}\vert f(re^{i\theta})\vert^p\sigma(d\theta)<\infty.
\]
As usual, we identify each function $f$ in $H_p(\D)$ with its boundary function
\[
f(e^{i\theta}):=\lim_{r\uparrow 1}f(re^{i\theta})\quad\mbox{$\sigma$-a.e.}
\]
in $H_p(\T)$ (cf.\ Rosenblum and Rovnyak \cite{RR}).

A function $h$ in $H_2^{n\times n}(\T)$ is called {\em outer}\/ if $\det h$ is a
$\C$-valued outer function, that is, $\det h$ satisfies
\begin{equation}
\log\vert \det h(0)\vert
=\int_{-\pi}^{\pi}\log\vert \det h(e^{i\theta})\vert \sigma(d\theta)
\label{eq:outer666}
\end{equation}
(cf.\ Katsnelson and Kirstein \cite[Definition 3.1]{KK}).

Let $\{X(k)\}$ be a $q$-variate stationary process.
If there exists a nonnegative $q\times q$ Hermitian matrix-valued function $w$ on $\T$, satisfying
$w\in L^{q\times q}_1(\T)$ and
\[
E[X(m)X(n)^*]=\int_{-\pi}^{\pi}e^{-i(m-n)\theta}w(e^{i\theta})\sigma(d\theta),\qquad n,m\in\Z,
\]
then we call $w$ the {\em spectral density}\/ of $\{X(k)\}$.
We say that $\{X(k)\}$ has {\em maximal rank}\/ if
\[
\mbox{$\{X(k)\}$ has spectral density $w$ such
that $\det w(e^{i\theta})>0$ $\sigma$-a.e.}
\tag{\rm MR}
\]
(see Rozanov \cite[pp.\ 71--72]{R}).
A $q$-variate stationary process $\{X(k)\}$ is said to be {\em purely nondeterministic}\/ (PND) if
\[
\cap_{n\in\Z}M_{(-\infty,n]}^X=\{0\}.
\tag{\rm PND}
\]
Every PND process $\{X(k)\}$ has spectral density but it does not necessarily have maximal rank
unlike univariate processes (see \cite[Theorem 4.1]{R}).
So we combine the two to define the condition
\[
\mbox{$\{X(k)\}$ satisfies both (MR) and (PND)}.
\tag{\rm A}
\]
A necessary and sufficient condition for (A) is that $\{X(k)\}$
has spectral density $w$ such that $\log \det w\in L_1(\T)$ (see \cite[Theorem 6.1]{R}).

Let $\{X(k)\}$ be a $q$-variate stationary process satisfying (A),
and let $w$ be its spectral density.
Then, the spectral density $w$ of $\{X(k)\}$ has a
decomposition of the form
\begin{equation}
w(e^{i\theta})=h(e^{i\theta})h(e^{i\theta})^*\quad \mbox{$\sigma$-a.e.}
\label{eq:decomp111}
\end{equation}
for some outer function $h$ in $H_2^{q\times q}(\T)$, and $h$ is unique up to a constant unitary
factor (see, e.g., \cite[Chapter II]{R} and Helson and Lowdenslager \cite[Theorem 11]{HL}).

\begin{lemma}\label{lem:lind332}
We assume {\rm (A)}\/. Then,
$X_j(k)$, $k\in\Z$, $j=1,\dots,q$, are linearly independent.
\end{lemma}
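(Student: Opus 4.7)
The plan is to turn the stated linear-independence property into a positivity statement about the spectral density and then invoke the maximal-rank assumption. Concretely, I would take an arbitrary finite linear combination $\sum_{k,j} c_{k,j} X_j(k) = 0$ in $M$, with only finitely many $c_{k,j}\in\C$ nonzero, and show that every $c_{k,j}$ must vanish.

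The first step is to compute the squared $M$-norm of such a combination. Using the covariance identity $E[X_j(k)\overline{X_l(n)}] = \int_{-\pi}^{\pi} e^{-i(k-n)\theta} w_{jl}(e^{i\theta})\sigma(d\theta)$, I would expand the inner product and recognize the result as
\[
\Big\Vert \sum_{k,j} c_{k,j} X_j(k)\Big\Vert_M^2 = \int_{-\pi}^{\pi} p(e^{i\theta})\, w(e^{i\theta})\, p(e^{i\theta})^*\, \sigma(d\theta),
\]
where $p$ is the $1\times q$ trigonometric polynomial whose $j$-th entry is $p_j(e^{i\theta}) := \sum_k c_{k,j} e^{-ik\theta}$.

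The second step is to exploit (MR). Since $\det w(e^{i\theta})>0$ $\sigma$-a.e.\ and $w$ is Hermitian nonnegative, $w(e^{i\theta})$ is strictly positive definite for $\sigma$-a.e.\ $\theta$. Hence the integrand is $\geq 0$ and equals zero only at points where $p(e^{i\theta})=0$. If the whole integral vanishes, I would conclude $p(e^{i\theta})=0$ $\sigma$-a.e., and since each $p_j$ is a trigonometric polynomial, this forces $p_j\equiv 0$, i.e.\ $c_{k,j}=0$ for all $k,j$.

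There is no real obstacle here; the argument uses only (MR) and the spectral representation, and (PND) is not needed for the linear-independence conclusion. The only point requiring care is bookkeeping in the vector/matrix computation of the norm, making sure conjugates and transposes land on the correct side so that the quadratic form $p w p^*$ appears in its Hermitian-nonnegative form.
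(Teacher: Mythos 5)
Your argument is correct, but it is not the route the paper takes. You pass immediately to the spectral domain: the norm of a finite combination $\sum_{k,j}c_{k,j}X_j(k)$ equals $\int_{-\pi}^{\pi}p\,w\,p^*\,\sigma(d\theta)$ for the $\C^{1\times q}$-valued trigonometric polynomial $p$ with entries $p_j(e^{i\theta})=\sum_k c_{k,j}e^{-ik\theta}$; since (MR) makes $w(e^{i\theta})$ strictly positive definite $\sigma$-a.e., vanishing of the integral forces $p=0$ a.e., and a nonzero trigonometric polynomial cannot vanish on a set of positive measure, so all $c_{k,j}=0$. This is essentially the statement that the $e_j(k)$ are linearly independent in $L(w)$, transported to $M$ by the isometry the paper only introduces later (the map $S$ in Section 3). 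The paper instead works in the time domain: it uses the innovation process and the moving-average representation $X(n)=\sum_{k\le n}c(n-k)\xi(k)$, projects a vanishing combination onto the orthogonal complement of $M^X_{(-\infty,n-1]}$ to isolate the top coefficient, and uses invertibility of $c(0)$, which comes from the outer condition (\ref{eq:outer666}); this argument leans on the full assumption (A) through the existence of the innovation representation. Your approach is more elementary and makes explicit that only (MR) (indeed, positive definiteness of $w$ on a set of positive measure) is needed, while the paper's proof has the side benefit of exercising the innovation/outer-factor machinery that the rest of the paper relies on. Just make sure, as you note, that the conjugates land correctly so the quadratic form is $p\,w\,p^*$, and that you justify why a trigonometric polynomial vanishing a.e.\ is identically zero (multiply by a high power of $e^{i\theta}$ and use that a nonzero algebraic polynomial has finitely many roots on $\T$).
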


\begin{proof}
Let $h(z)=\sum_{n=0}^{\infty}c(n)z^n$, $z\in\D$,
be the power series expansion of $h$, where $\{c(n)\}_{n=0}^{\infty}$ is a $\C^{q\times q}$-valued
sequence whose entries $\{c_{i, j}(n)\}_{n=0}^{\infty}$, $i,j=1,\dots,q$, belong to
$\ell^2$.
Then, there exists a $q$-variate stationary process $\{\xi(k)\}$, called the {\em innovation
process}\/ of $\{X(k)\}$, satisfying $E[\xi(n)\xi(m)^*]=\delta_{n, m}I_q$ and
\begin{align*}
& X(n)=\sum_{k=-\infty}^{n}c(n-k)\xi(k),\quad n\in\Z,\\
& M_{(-\infty,n]}^X=M_{(-\infty,n]}^{\xi},\quad n\in\Z,
\end{align*}
where $M_{(-\infty,n]}^{\xi}:=\cspn\{\xi_j(k): k\le 0,\ j=1,\dots,q\}$ in $L^2(\Omega, \mcF,P)$
(see Theorem 4.3 in \cite[Chapter II]{R}).

Suppose $\sum_{k=m}^na(k)X(k)=0$ for $n, m\in\Z$ with $m\le n$ and
$a(k)\in\C^{1\times q}$, $k=m,\dots,n$. Let $Q$ be the projection operator from $M$
onto the orthogonal complement $(M_{(-\infty,n-1]}^X)^{\bot}$ of $M_{(-\infty,n-1]}^X$.
Then,
\[
0=Q\left(\sum\nolimits_{k=m}^na(k)X(k)\right)=a(n)c(0)\xi(n).
\]
Since $\xi_1(n),\dots,\xi_q(n)$ are linearly independent, we have $a(n)c(0)=0$.
However, $c(0)$ is invertible by (\ref{eq:outer666}), whence $a(n)=0$. In the same way,
we also obtain $a(n-1)=\cdots=a(m)=0$. Thus, $X_j(k)$'s are linearly independent.
\end{proof}

In addition to (\ref{eq:decomp111}), $w$ has a
decomposition of the form
\begin{equation}
w(e^{i\theta})=h_{\sharp}(e^{i\theta})^*h_{\sharp}(e^{i\theta})\quad \mbox{$\sigma$-a.e.}
\label{eq:decomp222}
\end{equation}
for another outer function $h_{\sharp}$ in $H_2^{q\times q}(\T)$, and $h_{\sharp}$ is also
unique up to a constant unitary factor. In fact, for an outer function $g$ in
$H_2^{q\times q}(\T)$ satisfying $w(e^{i\theta})^{\mrmT}=g(e^{i\theta})g(e^{i\theta})^*$
$\sigma$-a.e., we may take $h_{\sharp}=g^{\mrmT}$. It should be noticed that
while we may take $h_{\sharp}=h$ for the univariate case $q=1$, there is no such simple
relation between $h$ and $h_{\sharp}$ for $q\ge 2$.

We denote by $L(w)$ the complex Hilbert space consisting of all measurable
functions $f: \T\to \C^{1\times q}$ with
$\int_{-\pi}^{\pi}f(e^{i\theta})w(e^{i\theta})f(e^{i\theta})^*\sigma(d\theta)<\infty$,
which has inner product
\[
(f,g)_w:=\int_{-\pi}^{\pi}f(e^{i\theta})w(e^{i\theta})g(e^{i\theta})^*\sigma(d\theta)
\]
and norm $\Vert f\Vert_w:=(f,f)_w^{1/2}$.
For $k\in\Z$ and $j=1,\ldots,q$, we define $e_j(k)\in L(w)$ by
\[
e_j(k)(z)
:=(0,\dots,0,z^{-k},0,\dots,0),\quad z\in\T,
\]
where $z^{-k}$ is in the $j$-th coordinate.
For an interval $I\subset\Z$, let $L_I(w)$ be the closed subspace of $L(w)$
spanned by $\{e_j(k): k\in I,\ j=1,\dots,q\}$.
By taking $I_q$ as $w$, we regard $L^{1\times q}_2(\T)$ as the complex Hilbert space
$L(I_q)$ with inner product
$(f,g)_{I_q}:=\int_{-\pi}^{\pi}f(e^{i\theta})g(e^{i\theta})^*\sigma(d\theta)$
and norm $\Vert f\Vert_{I_q}:=(f,f)_{I_q}^{1/2}$, and $H_2^{1\times q}(\T)$
as its closed subspace.

We put, for $p\in [1,\infty)$,
\[
\overline{H_p^{1\times q}(\T)}:=\left\{\bar{f}: f\in H_p^{1\times q}(\T)\right\}.
\]

\begin{lemma}\label{lem:fundament268}
We assume {\rm (A)}\/. Then, for $n\in\mathbb{Z}$ and 
outer functions $h$ and $h_{\sharp}$ in $H_2^{q\times q}(\T)$
satisfying {\rm (\ref{eq:decomp111})}\/ and {\rm (\ref{eq:decomp222})}\/, respectively,
the following two equalities hold:
\begin{align}
&L_{(-\infty,n]}(w)=z^n\cdot H_2^{1\times q}(\T)\cdot h^{-1},
\label{eq:fund111}\\
&L_{[n,\infty)}(w)
=z^{-n}\cdot \overline{H_2^{1\times q}(\T)}\cdot (h_{\sharp}^*)^{-1}.
\label{eq:fund222}
\end{align}
\end{lemma}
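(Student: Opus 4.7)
The plan is to transplant the problem from $L(w)$ to $L_2^{1\times q}(\T)$ by means of the spectral factorizations of $w$. First I would introduce the two maps $\iota,\iota_\sharp:L(w)\to L_2^{1\times q}(\T)$ defined by $\iota(f):=fh$ and $\iota_\sharp(f):=fh_\sharp^*$. Both are isometries, by a one-line calculation from (\ref{eq:decomp111}) and (\ref{eq:decomp222}), and on the spanning elements $e_j(k)$ one has $\iota(e_j(k))=z^{-k}\cdot r_j$ and $\iota_\sharp(e_j(k))=z^{-k}\cdot\tilde r_j$, where $r_j$ and $\tilde r_j$ denote the $j$th rows of $h$ and $h_\sharp^*$ respectively. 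Applying $\iota$ and pulling out the common power of $z$, the identity (\ref{eq:fund111}) reduces to the key claim
\[
N:=\cspn\{z^m\cdot r_j:m\ge 0,\ j=1,\dots,q\}=H_2^{1\times q}(\T).
\]
The identity (\ref{eq:fund222}) reduces under $\iota_\sharp$, after complex conjugation on $\T$, to the same claim with $h$ replaced by $h_\sharp^{\mrmT}$; the replacement is outer because $\det h_\sharp^{\mrmT}=\det h_\sharp$.

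To prove the key claim I would invoke the Beurling--Lax--Halmos theorem for the row-valued Hardy space: $N$ is a closed $z$-invariant subspace of $H_2^{1\times q}(\T)$, so $N=H_2^{1\times s}(\T)\cdot\Theta$ for some $s\le q$ and some $\Theta\in H_\infty^{s\times q}(\T)$ with $\Theta\Theta^*=I_s$ a.e. Since all rows of $h$ lie in $N$, we may write $h=A\Theta$ for some $A\in H_2^{q\times s}(\T)$. If $s<q$ then $A\Theta$ has rank at most $s<q$ a.e., so $\det h\equiv 0$, contradicting outerness of $\det h$; hence $s=q$. Taking determinants gives $\det h=\det A\cdot\det\Theta$, in which $\det\Theta$ is scalar inner while $\det h$ is outer (by the definition in (\ref{eq:outer666})). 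Uniqueness of inner-outer factorization forces $\det\Theta$ to be a unimodular constant, and then $\Theta^{-1}=(\det\Theta)^{-1}\operatorname{adj}\Theta\in H_\infty^{q\times q}(\T)$, while simultaneously $\Theta^{-1}=\Theta^*\in\overline{H_\infty^{q\times q}(\T)}$. Entrywise membership in $H_\infty\cap\overline{H_\infty}$ forces $\Theta^{-1}$, hence $\Theta$, to be a constant unitary matrix, whence $N=H_2^{1\times q}(\T)\cdot\Theta=H_2^{1\times q}(\T)$.

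The main obstacle I expect is the Beurling--Lax--Halmos step together with the determinant/outerness bookkeeping that rules out a nontrivial inner factor $\Theta$; this is precisely the point at which hypothesis (A) is invoked. Everything else is accounting, and the second identity follows by applying the same reduction to $h_\sharp^{\mrmT}$ in place of $h$, after a complex conjugation on $\T$.
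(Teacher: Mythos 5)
Your proposal is correct and follows essentially the same route as the paper: transplant $L(w)$ into $L_2^{1\times q}(\T)$ via the isometry induced by the spectral factor, and reduce both identities to the density of polynomial multiples of an outer matrix function in $H_2^{1\times q}(\T)$; the only difference is that you prove this density yourself via Beurling--Lax--Halmos plus the determinant/outerness argument, whereas the paper simply cites it from Katsnelson--Kirstein. One bookkeeping remark: carried out carefully, your reduction for (\ref{eq:fund111}) produces $z^{-n}\cdot H_2^{1\times q}(\T)\cdot h^{-1}$ (consistent with the factor $z^{-n}$ in (\ref{eq:fund222}) and with the convention $e_j(k)(z)=(0,\dots,z^{-k},\dots,0)$), so the exponent $z^{n}$ in the printed statement should be read as $z^{-n}$ --- a sign discrepancy that your ``pulling out the common power of $z$'' silently glosses over, though it is harmless for the later applications, which only use $n=0$ there.
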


\begin{proof}We prove only (\ref{eq:fund222}); one can prove (\ref{eq:fund111}) in a similar way.
Define an antilinear bijection $G:L(w)\to L^{1\times q}_2(\T)$ by
$G(f):=\overline{fh_{\sharp}^*}=\bar{f}h_{\sharp}^{\mrmT}$.
Since
\[
\Vert G(f)\Vert_{I_q}^2=\Vert fh_{\sharp}^*\Vert_{I_q}^2
=\int_{-\pi}^{\pi}f(e^{i\theta})h_{\sharp}(e^{i\theta})^*
\left\{f(e^{i\theta})h_{\sharp}(e^{i\theta})^*\right\}^*\sigma(d\theta)
=\Vert f\Vert_w^2,
\]
the map $G$ preserves the norms of $f\in L(w)$. Let
\[
\C^{1\times q}[z]:=\spn \{e_j(k): k\le 0,\ j=1,\dots,q\}
\]
be the space of polynomials with coefficients in $\C^{1\times q}$. Since $h_{\sharp}^{\mrmT}$
is also an outer function in $H_2^{q\times q}(\T)$, it follows from
the Beurling--Lax--Halmos Theorem
that $\C^{1\times q}[z]\cdot h_{\sharp}^{\mrmT}$
is dense in $H_2^{1\times q}(\T)$ (cf.\ \cite[Remark 5.6 and Theorem 5.3]{KK}). Moreover,
\[
L_{[n,\infty)}(w)=\cspn\{e_j(k): k\ge n,\ j=1,\dots,q\}
\]
and
\[
G(\spn\{e_j(k): k\ge n,\ j=1,\dots,q\})=z^n\cdot \C^{1\times q}[z]\cdot h_{\sharp}^{\mrmT}.
\]
Thus,
\[
L_{[n,\infty)}(w)=G^{-1}\left(z^n\cdot H_2^{1\times q}(\T)\right)
=z^{-n}\cdot \overline{H_2^{1\times q}(\T)}\cdot (h_{\sharp}^*)^{-1},
\]
as desired.
\end{proof}


\section{The Past and future}\label{sec:3}

For a $q$-variate stationary process $\{X(k)\}$,
the next theorem holds without (A).

\begin{theorem}\label{thm:cnd-pnd-ipf327}
A $q$-variate CND process satisfies {\rm (IPF)}\/.
\end{theorem}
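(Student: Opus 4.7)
The plan is to deduce (IPF) directly from (CND) by exploiting that $M_{[-n,-1]}^X$ is finite-dimensional. First I would note that $M_{[-n,-1]}^X$ is the span of the $nq$ vectors $\{X_j(k) : -n \le k \le -1,\ j=1,\dots,q\}$, hence finite-dimensional. Using the standard fact that in a Hilbert (or Banach) space the sum of a closed subspace and a finite-dimensional subspace is closed, $M_{[-n,-1]}^X + M_{[0,\infty)}^X$ is a closed subspace of $M$. Since it contains every generator $X_j(k)$ with $k\ge -n$, and $M_{[-n,\infty)}^X$ is by definition the closed linear span of these generators, I obtain the identity
\[
M_{[-n,\infty)}^X = M_{[-n,-1]}^X + M_{[0,\infty)}^X.
\]

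With this identity in hand, the proof concludes quickly. Given $Y \in M_{(-\infty,-1]}^X \cap M_{[-n,\infty)}^X$, I decompose $Y = A + B$ with $A \in M_{[-n,-1]}^X$ and $B \in M_{[0,\infty)}^X$. Since $A \in M_{[-n,-1]}^X \subset M_{(-\infty,-1]}^X$ and $Y \in M_{(-\infty,-1]}^X$, the difference $B = Y - A$ also lies in $M_{(-\infty,-1]}^X$. Hence $B \in M_{(-\infty,-1]}^X \cap M_{[0,\infty)}^X = \{0\}$ by (CND), and so $Y = A \in M_{[-n,-1]}^X$. The reverse inclusion $M_{[-n,-1]}^X \subset M_{(-\infty,-1]}^X \cap M_{[-n,\infty)}^X$ is trivial.

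The main point is the closedness of $M_{[-n,-1]}^X + M_{[0,\infty)}^X$; without it one would only obtain $Y$ as a limit of such sums, and could not conclude that the ``$B$ piece'' lies in $M_{(-\infty,-1]}^X$. No spectral or outer-function machinery from Section~\ref{sec:2} is needed, consistent with the fact that Theorem~\ref{thm:cnd-pnd-ipf327} is stated without any assumption on the structure of the spectral measure.
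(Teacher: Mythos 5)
Your proof is correct, and it is a mirror image of the paper's argument rather than a literally identical one. Both proofs hinge on exactly the same key fact — the sum of a closed subspace and a finite-dimensional subspace is closed — but you apply it to split the \emph{future}, $M_{[-n,\infty)}^X = M_{[-n,-1]}^X + M_{[0,\infty)}^X$, whereas the paper's identity (\ref{eq:vector-sum281}) splits the \emph{past}, $M_{(-\infty,-1]}^X = M_{(-\infty,-n-1]}^X + M_{[-n,-1]}^X$. The consequence is small but real: the paper's decomposition produces an element of $M_{(-\infty,-n-1]}^X \cap M_{[-n,\infty)}^X$, so killing it requires the time-shifted form of (CND), i.e.\ an implicit appeal to stationarity (the shift acts unitarily on $M$ and carries the lag-$0$ intersection to the lag-$(-n)$ one); your decomposition lands the residual term $B$ directly in $M_{(-\infty,-1]}^X \cap M_{[0,\infty)}^X$, so (CND) applies verbatim and no shift argument is needed. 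In that sense your version is marginally more self-contained; the paper's version states the sum identity for general $m\le n$, which is the form reused in the rest of their framework. Your remark about why closedness is indispensable (otherwise $Y$ is only a limit of sums $A+B$ and the argument breaks) is exactly the right point to flag.
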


\begin{proof}
For any $q$-variate stationary process $\{X(k)\}$, we have
\begin{equation}
M_{(-\infty,n]}^X=M_{(-\infty,m-1]}^X + M_{[m,n]}^X,\qquad m, n\in\Z,\ m\le n.
\label{eq:vector-sum281}
\end{equation}
For, the inclusion $\supset$ is trivial, while $M_{(-\infty,m-1]}^X$ is closed
and $M_{[m,n]}^X$ is finite-dimensional, whence $M_{(-\infty,m-1]}^X + M_{[m,n]}^X$
is also closed (see Halmos \cite[Problem 8]{H}), which implies $\subset$.

For $n\in\N$, let $x\in M_{(-\infty,-1]}^X\cap M_{[-n,\infty)}^X$.
Since $x\in M_{(-\infty,-1]}^X$, it follows from (\ref{eq:vector-sum281}) that
$x=y+z$ for some $y\in M_{(-\infty,-n-1]}^X$ and $z\in M_{[-n,-1]}^X$.
Since $x, z\in M_{[-n,\infty)}^X$, we have
\[
y=x-z\in
M_{(-\infty,-n-1]}^X\cap M_{[-n,\infty)}^X.
\]
Therefore, if $\{X(k)\}$ is CND, then $y=0$ or $x=z\in M_{[-n,-1]}^X$, so that
\[
M_{(-\infty,-1]}^X\cap M_{[-n,\infty)}^X\subset M_{[-n,-1]}^X.
\]
Since the converse inclusion $\supset$ is trivial, $\{X(k)\}$ satisfies (IPF).
\end{proof}

\begin{remark}\label{rem:cnd665}
The converse of Theorem \ref{thm:cnd-pnd-ipf327} does not hold without
additional assumptions. For example, let $\{Y(k): k\in\Z\}$ be a univariate
CND stationary process; the simplest example is a white noise. Then $\{Y(k)\}$ is PND.
Define a two-variate stationary process $\{X(k): k\in\Z\}$ by $X(k):=(Y(k-1), Y(k))^{\mrmT}$.
For $I\subset \Z$, let
$M_I^Y:=\cspn\{Y(k): k\in I\}$ in $L^2(\Omega, \mcF,P)$.
Then, for $n,m\in\Z$ with $n\le m$, we have
\[
M_{(-\infty,n]}^X=M_{(-\infty,n]}^Y,\quad
M_{[n,\infty)}^X=M_{[n-1,\infty)}^Y,\quad
M_{[n,m]}^X=M_{[n-1,m]}^Y.
\]
Since $\cap_nM_{(-\infty,n]}^X=\cap_nM_{(-\infty,n]}^Y=\{0\}$, $\{X(k)\}$ is PND.
Furthermore, for $n\ge 1$,
\[
M_{(-\infty,-1]}^X\cap M_{[-n,\infty)}^X
=M_{(-\infty,-1]}^Y\cap M_{[-n-1,\infty)}^Y
=M_{[-n-1,-1]}^Y=M_{[-n,-1]}^X,
\]
whence $\{X(k)\}$ satisfies (IPF). However,
\[
M_{(-\infty,-1]}^X\cap M_{[0,\infty)}^X=M_{(-\infty,-1]}^Y\cap M_{[-1,\infty)}^Y
=M_{\{-1\}}^Y\ne \{0\},
\]
whence $\{X(k)\}$ is not CND. Notice that $\{X(k)\}$ has the degenerate spectral
density
\[
w_X(e^{i\theta})=
\left(
\begin{matrix}
w_Y(e^{i\theta}) & e^{i\theta}w_Y(e^{i\theta})\\
e^{-i\theta}w_Y(e^{i\theta}) & w_Y(e^{i\theta})
\end{matrix}
\right),
\]
where $w_Y$ is the spectral density of $\{Y(k)\}$.
\end{remark}

We assume (A), and for outer functions $h$ and $h_{\sharp}$ in $H_2^{q\times q}(\T)$
satisfying (\ref{eq:decomp111}) and (\ref{eq:decomp222}), respectively,
we consider the following two conditions:
\begin{align}
\left\{z^{-1}\cdot \overline{H_2^{1\times q}(\T)}\cdot (h_{\sharp}^*)^{-1}\right\}\cap
\left\{H_2^{1\times q}(\T)\cdot h^{-1}\right\}&=\{(0,\dots,0)\},
\label{eq:cnd676}\\
\left\{\overline{H_2^{1\times q}(\T)}\cdot (h_{\sharp}^*)^{-1}\right\}\cap
\left\{H_2^{1\times q}(\T)\cdot h^{-1}\right\}&=\C^{1\times q}.
\label{eq:cnd787}
\end{align}
For any $a\in\C^{1\times q}$, we have $ah^*_{\sharp}\in \overline{H_2^{1\times q}(\T)}$,
$ah\in H_2^{1\times q}(\T)$ and
\[
a=ah^*_{\sharp}(h^*_{\sharp})^{-1}=ahh^{-1},
\]
whence the inclusion $\supset$  in (\ref{eq:cnd787}) always holds.

Let
$X(k)=\int_{-\pi}^{\pi}e^{-ik\theta}Z(d\theta)$, $k\in\Z$,
be the spectral representation of $\{X(k)\}$ satisfying (A), where $Z$ is the
random spectral measure such that
\[
E[Z(\Lambda_1)Z(\Lambda_2)^*]=\int_{\Lambda_1\cap \Lambda_2}w(e^{i\theta})\sigma(d\theta),
\qquad \Lambda_1, \Lambda_2\in \mcB([-\pi,\pi)).
\]
Define an isometric isomorphism $S: L(w)\to M$ by
\[
S(f):=\int_{-\pi}^{\pi}f(e^{i\theta})Z(d\theta), \qquad f\in L(w).
\]
Then, $S(e_j(k))=X_j(k)$ for $k\in\Z$ and $j=1,\dots,q$, whence we have
\begin{equation}
S(L_I(w))=M_I^X,\qquad I\subset \Z.
\label{eq:S425}
\end{equation}

\begin{lemma}\label{lem:key-lemma11}
We assume {\rm (A)}\/. Then,
the following two conditions are equivalent:
\begin{enumerate}
\item {\rm (\ref{eq:cnd676})}\/ holds.
\item $M_{(-\infty,0]}^X\cap M_{[1,\infty)}^X=\{0\}$.
\end{enumerate}
\end{lemma}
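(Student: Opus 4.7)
The plan is to translate the spatial condition (2) about subspaces of $M$ into a condition about subspaces of the function space $L(w)$ using the isometric isomorphism $S$ from (\ref{eq:S425}), and then identify those subspaces of $L(w)$ with the Hardy-type spaces on the left side of (\ref{eq:cnd676}) using Lemma \ref{lem:fundament268}.

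First I would specialize Lemma \ref{lem:fundament268} to the two intervals of interest. Taking $n=0$ in (\ref{eq:fund111}) gives
\[
L_{(-\infty,0]}(w)=H_2^{1\times q}(\T)\cdot h^{-1},
\]
and taking $n=1$ in (\ref{eq:fund222}) gives
\[
L_{[1,\infty)}(w)=z^{-1}\cdot \overline{H_2^{1\times q}(\T)}\cdot (h_{\sharp}^*)^{-1}.
\]
Thus condition (\ref{eq:cnd676}) is exactly the statement that $L_{(-\infty,0]}(w)\cap L_{[1,\infty)}(w)=\{(0,\dots,0)\}$.

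Next I would appeal to the map $S:L(w)\to M$ defined just before the lemma. Since $S$ is an isometric isomorphism, it is in particular a bijection, and a bijection between sets carries intersections to intersections of images. Combining this with (\ref{eq:S425}) applied to $I=(-\infty,0]$ and $I=[1,\infty)$ gives
\[
S\bigl(L_{(-\infty,0]}(w)\cap L_{[1,\infty)}(w)\bigr)
=M_{(-\infty,0]}^X\cap M_{[1,\infty)}^X.
\]
Because $S$ is injective and linear, the left-hand intersection is $\{(0,\dots,0)\}$ if and only if the right-hand intersection is $\{0\}$. Chaining this equivalence with the one established in the previous paragraph yields (1) $\Leftrightarrow$ (2).

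There is no real obstacle here, since the theorem is essentially a dictionary translation: the substance was already encoded in Lemma \ref{lem:fundament268} and in the unitary identification $S$. The only point requiring a line of care is the assertion that $S$ maps the intersection of the two closed subspaces onto the intersection of their images, which is immediate from bijectivity of $S$ and does not even require the isometry property.
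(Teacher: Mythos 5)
Your proposal is correct and follows essentially the same route as the paper: specialize Lemma \ref{lem:fundament268} (with $n=0$ and $n=1$) to identify $L_{(-\infty,0]}(w)$ and $L_{[1,\infty)}(w)$ with the two Hardy-type spaces in (\ref{eq:cnd676}), then use the isometric isomorphism $S$ and (\ref{eq:S425}) to transfer the trivial-intersection condition between $L(w)$ and $M$. No gaps; the remark that injectivity of $S$ suffices for carrying intersections to intersections is a fair (if implicit in the paper) point of care.
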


\begin{proof}
By (\ref{eq:S425}), 
(2) is equivalent to
$L_{(-\infty,0]}(w)\cap L_{[1,\infty)}(w)=\{(0,\dots,0)\}$,
which, in turn, is equivalent to (1) by Lemma \ref{lem:fundament268}.
\end{proof}

\begin{lemma}\label{lem:key-lemma}
We assume {\rm (A)}\/. Then,
the following two conditions are equivalent:
\begin{enumerate}
\item {\rm (\ref{eq:cnd787})}\/ holds.
\item $M_{(-\infty,0]}^X\cap M_{[0,\infty)}^X=M_{\{0\}}^X$.
\end{enumerate}
\end{lemma}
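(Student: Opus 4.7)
The plan is to proceed in close parallel to the proof of Lemma \ref{lem:key-lemma11}, using the isometric isomorphism $S:L(w)\to M$ and the relation (\ref{eq:S425}) to translate condition (2) into a statement about closed subspaces of $L(w)$, then applying Lemma \ref{lem:fundament268} (with $n=0$) to identify the relevant subspaces with Hardy-class expressions. The only extra ingredient beyond the previous lemma is verifying that $L_{\{0\}}(w)$ corresponds precisely to the constant functions $\C^{1\times q}$.

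First, by (\ref{eq:S425}), condition (2) is equivalent to
\[
L_{(-\infty,0]}(w)\cap L_{[0,\infty)}(w) = L_{\{0\}}(w).
\]
By Lemma \ref{lem:fundament268} applied with $n=0$, the left-hand side equals
\[
\bigl\{H_2^{1\times q}(\T)\cdot h^{-1}\bigr\}\cap
\bigl\{\overline{H_2^{1\times q}(\T)}\cdot (h_{\sharp}^*)^{-1}\bigr\}.
\]
So the lemma reduces to showing that $L_{\{0\}}(w)=\C^{1\times q}$, viewed as the space of constant $\C^{1\times q}$-valued functions on $\T$.

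For this identification, I would argue as follows. By the definition of $e_j(0)$, each $e_j(0)$ is the constant function whose value is the standard basis row vector with a $1$ in the $j$-th slot and zeros elsewhere. Thus $\spn\{e_j(0):j=1,\dots,q\}$ is contained in $\C^{1\times q}$ and conversely spans all of it. To see that the span has full dimension $q$, note that by Lemma \ref{lem:lind332} the random variables $X_1(0),\dots,X_q(0)$ are linearly independent in $M$, and since $S$ is an isometric isomorphism with $S(e_j(0))=X_j(0)$, the vectors $e_1(0),\dots,e_q(0)$ are linearly independent in $L(w)$. Hence $L_{\{0\}}(w)=\spn\{e_j(0)\}=\C^{1\times q}$ (the span is automatically closed because it is finite-dimensional).

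Combining these observations gives the equivalence of (1) and (2). There is no serious obstacle here; the argument is a direct translation, and the only nontrivial point is the identification $L_{\{0\}}(w)=\C^{1\times q}$, which rests on the linear independence supplied by Lemma \ref{lem:lind332}.
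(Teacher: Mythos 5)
Your proof is correct and follows essentially the same route as the paper: translate condition (2) through $S$ and (\ref{eq:S425}), apply Lemma \ref{lem:fundament268} with $n=0$, and identify $L_{\{0\}}(w)=\spn\{e_j(0)\}=\C^{1\times q}$. The extra justification of that identification via Lemma \ref{lem:lind332} is harmless but not needed, since the $e_j(0)$ are by definition the constant standard basis row vectors, so their span is $\C^{1\times q}$ directly.
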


\begin{proof}
We have 
$L_{\{0\}}(w)=\spn\{e_j(0): j=1,\dots,q\}=\C^{1\times q}$. Hence, by (\ref{eq:S425}), 
(2) is equivalent to
$L_{(-\infty,0]}(w)\cap L_{[0,\infty)}(w)=\C^{1\times q}$,
which, in turn, is equivalent to (1) by Lemma \ref{lem:fundament268}.
\end{proof}

Here is the main theorem of this paper.

\begin{theorem}\label{thm:A}
We assume {\rm (A)}\/. Then,
the following five conditions are equivalent:
\begin{enumerate}
\item {\rm (\ref{eq:cnd676})}\/ holds.
\item {\rm (\ref{eq:cnd787})}\/ holds.
\item {\rm (CND)}\/ holds.
\item $M_{(-\infty,-1]}^X\cap M_{[-n,\infty)}^X=M_{[-n,-1]}^X$ for some $n\in\N$.
\item {\rm (IPF)}\/ holds.
\end{enumerate}
\end{theorem}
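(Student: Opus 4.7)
The plan is to close the cycle $(3)\Rightarrow(5)\Rightarrow(2)\Rightarrow(4)\Rightarrow(3)$ together with the side identification $(1)\Leftrightarrow(3)$. The spectral-to-time-domain translations packaged in Lemmas \ref{lem:key-lemma11} and \ref{lem:key-lemma}, combined with stationarity of $\{X(k)\}$ applied via the shift unitary $T$ on $M$ that sends $X_j(k)$ to $X_j(k+1)$, immediately yield $(1)\Leftrightarrow(3)$ and $(2)\Leftrightarrow$ the $n=1$ instance of (IPF). Theorem \ref{thm:cnd-pnd-ipf327} supplies $(3)\Rightarrow(5)$. Specializing (5) to $n=1$ and invoking the equivalence just noted gives $(5)\Rightarrow(2)$, and $(2)\Rightarrow(4)$ is immediate since the $n=1$ case of (IPF) is one of the instances permitted by (4).

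The one nontrivial step, which I expect to be the main obstacle, is $(4)\Rightarrow(3)$. Fix an $n_0\in\N$ for which (4) holds, and take $x\in M_{(-\infty,-1]}^X\cap M_{[0,\infty)}^X$. Since $M_{[0,\infty)}^X\subset M_{[-n_0,\infty)}^X$, (4) gives $x\in M_{[-n_0,-1]}^X$, so Lemma \ref{lem:lind332} produces a unique representation $x=\sum_{k=-n_0}^{-1}a(k)X(k)$ with $a(k)\in\C^{1\times q}$. Applying $T^{-n_0}$, the hypothesis $x\in M_{[0,\infty)}^X$ yields $T^{-n_0}x\in M_{[-n_0,\infty)}^X$, while the explicit formula above places $T^{-n_0}x=\sum_{k=-2n_0}^{-n_0-1}a(k+n_0)X(k)$ in $M_{[-2n_0,-n_0-1]}^X\subset M_{(-\infty,-1]}^X$. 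A second application of (4) then puts $T^{-n_0}x$ in $M_{[-n_0,-1]}^X\cap M_{[-2n_0,-n_0-1]}^X$, and since the two finite-dimensional spans use disjoint sets of time indices, Lemma \ref{lem:lind332} forces this intersection to be $\{0\}$. Hence $T^{-n_0}x=0$, and therefore $x=0$, establishing (3).

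The reason $(4)\Rightarrow(3)$ is the delicate step is that a priori (4) asserts (IPF) only at some single value of $n$, whereas (3) is the structurally different empty-overlap property (CND). The device above uses (4) \emph{twice}---once to confine $x$ to a finite-dimensional past window, and once more, after the shift $T^{-n_0}$, to confine $T^{-n_0}x$ to a second finite-dimensional window disjoint from the first---at which point the linear independence supplied by Lemma \ref{lem:lind332} (which is where assumption (A) is essential) annihilates the intersection. Remark \ref{rem:cnd665} indicates that without such non-degeneracy the equivalence between (IPF) and (CND) can genuinely fail.
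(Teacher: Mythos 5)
Your proposal is correct and follows essentially the same route as the paper: the cycle $(2)\Rightarrow(4)\Rightarrow(3)\Rightarrow(5)\Rightarrow(2)$ together with $(1)\Leftrightarrow(3)$ via Lemma \ref{lem:key-lemma11}, with Lemma \ref{lem:key-lemma} handling the $n=1$ identification and Theorem \ref{thm:cnd-pnd-ipf327} giving $(3)\Rightarrow(5)$. Your $(4)\Rightarrow(3)$ step, shifting the element $x$ by $T^{-n_0}$ and applying (4) a second time, is just a reformulation of the paper's argument, which instead intersects with the shifted instance $M_{(-\infty,n-1]}^X\cap M_{[0,\infty)}^X=M_{[0,n-1]}^X$ and then invokes Lemma \ref{lem:lind332} on the two disjoint finite windows exactly as you do.
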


\begin{proof}
By Lemma \ref{lem:key-lemma11}, (1) and (3) are equivalent. 
By Lemma \ref{lem:key-lemma}, (2) (resp., (5)) implies (4) (resp., (2)). 
By Theorem \ref{thm:cnd-pnd-ipf327}, (3) implies (5).
Suppose (4). Then,
\begin{align*}
&M_{(-\infty,-1]}^X\cap M_{[0,\infty)}^X \subset
M_{(-\infty,-1]}^X\cap M_{[-n,\infty)}^X=M_{[-n,-1]}^X,\\
&M_{(-\infty,-1]}^X\cap M_{[0,\infty)}^X \subset
M_{(-\infty,n-1]}^X\cap M_{[0,\infty)}^X=M_{[0,n-1]}^X.
\end{align*}
However, by Lemma \ref{lem:lind332}, we have $M_{[-n,-1]}^X \cap M_{[0,n-1]}^X=\{0\}$,
whence (3).
\end{proof}

The next corollary gives a sufficient condition
for (IPF) in terms of the spectral density.

\begin{corollary}\label{cor:B}
We assume {\rm (MR)}\/ and that the spectral density $w$ of $\{X(k)\}$ satisfies
$w^{-1}\in L^{q\times q}_1(\T)$. Then $\{X(k)\}$ satisfies {\rm (IPF)}.
\end{corollary}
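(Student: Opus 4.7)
The strategy is to verify condition~(2) of Theorem~\ref{thm:A} and deduce (IPF) from that theorem.

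I first check that (MR) together with $w^{-1}\in L_1^{q\times q}(\T)$ force (A). By the criterion recalled just before Lemma~\ref{lem:lind332}, this amounts to showing $\log\det w\in L_1(\T)$. Applying Hadamard's inequality $\det A\le(\operatorname{tr}A/q)^q$ to the positive-definite $w$ and to $w^{-1}$, and combining with $\log^{+}x\le x$, gives the pointwise bound
\[
|\log\det w|\le \operatorname{tr}(w)+\operatorname{tr}(w^{-1}),
\]
whose right-hand side is integrable by hypothesis.

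The main step is to show $h^{-1},h_{\sharp}^{-1}\in H_2^{q\times q}(\T)$. The $L_2$ bound on the boundary values is immediate from the Hilbert--Schmidt identity $\|h_{\sharp}^{-1}\|_{\mathrm{HS}}^{2}=\operatorname{tr}(h_{\sharp}^{-1}(h_{\sharp}^{*})^{-1})=\operatorname{tr}(w^{-1})$, which is integrable. To upgrade this to $H_2$-membership I plan to apply the matrix Helson--Lowdenslager factorization to $w^{-1}$, legitimate because $\log\det(w^{-1})=-\log\det w\in L_1(\T)$ by the previous step. This yields an outer $\tilde h\in H_2^{q\times q}(\T)$ with $w^{-1}=\tilde h\tilde h^{*}$. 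A direct computation gives
\[
(h_{\sharp}\tilde h)(h_{\sharp}\tilde h)^{*}=h_{\sharp}w^{-1}h_{\sharp}^{*}=h_{\sharp}(h_{\sharp}^{*}h_{\sharp})^{-1}h_{\sharp}^{*}=I,
\]
so $h_{\sharp}\tilde h$ is unitary on $\T$. Together with the fact that $\det(h_{\sharp}\tilde h)=\det h_{\sharp}\cdot\det\tilde h$ is scalar outer (product of scalar outers), and the standard fact that a matrix function which is simultaneously inner and outer must be a constant unitary (apply $H_\infty\cap\overline{H_\infty}=\C$ to its inverse), this forces $h_{\sharp}\tilde h\equiv U$ and hence $h_{\sharp}^{-1}=\tilde h U^{*}\in H_2^{q\times q}(\T)$. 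The analogous argument applied to $w=hh^{*}$ gives $h^{-1}\in H_2^{q\times q}(\T)$.

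Once this is known, condition~(2) of Theorem~\ref{thm:A} is Fourier-support bookkeeping. Any element of $H_2^{1\times q}(\T)\cdot h^{-1}$ has entries that are sums of products of two $H_2$ functions and therefore lies in $H_1^{1\times q}(\T)$; similarly $\overline{H_2^{1\times q}(\T)}\cdot(h_{\sharp}^{*})^{-1}\subseteq\overline{H_1^{1\times q}(\T)}$. Their intersection therefore lies in $H_1^{1\times q}(\T)\cap\overline{H_1^{1\times q}(\T)}=\C^{1\times q}$, while the reverse inclusion was already noted after~\eqref{eq:cnd787}. Thus (2) of Theorem~\ref{thm:A} holds, and that theorem delivers (IPF). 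The main obstacle is the promotion of $h^{-1}$ and $h_{\sharp}^{-1}$ from $L_2$ boundary values to $H_2^{q\times q}(\T)$; once past that, the remainder is routine.
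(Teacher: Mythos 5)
Your proof is correct, but the key step is handled by a genuinely different route than the paper's. The paper goes from $w^{-1}\in L_1^{q\times q}(\T)$ to $h^{-1}\in L_2^{q\times q}(\T)$ via $(w^{-1})_{jj}=\sum_i|(h^{-1})_{ij}|^2$, and then upgrades this to $h^{-1}\in H_2^{q\times q}(\T)$ by citing Smirnov-class machinery (the inverse of an outer matrix function lies in the Smirnov class, and a Smirnov-class function with $L_2$ boundary values is in $H_2$; \cite[Theorem 3.1]{KK} together with \cite[Theorem 4.23]{RR}). You instead apply the Helson--Lowdenslager factorization a second time, to $w^{-1}$ (legitimate since $\log\det(w^{-1})=-\log\det w\in L_1(\T)$), and identify $h_{\sharp}^{-1}$ (resp.\ $h^{-1}$) with the resulting outer factor up to a constant unitary, via the observation that $h_{\sharp}\tilde h$ is an $H_1$ matrix function with unitary boundary values and outer determinant, hence a constant unitary; note that this makes your preliminary $L_2$ bound via $\operatorname{tr}(w^{-1})$ superfluous, and that for $h$ one should use the factorization of $w^{-1}$ in the form $\tilde g^{*}\tilde g$, which exists by the transposition remark after \eqref{eq:decomp222}. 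Your route is more self-contained (no Smirnov-class results needed, at the cost of the inner-with-outer-determinant-is-constant lemma), and you also supply a step the paper leaves implicit, namely that (MR) together with $w^{-1}\in L_1^{q\times q}(\T)$ yields $\log\det w\in L_1(\T)$ and hence (A), which is needed before Theorem \ref{thm:A} can be invoked. The concluding bookkeeping --- $H_2^{1\times q}(\T)\cdot h^{-1}\subset H_1^{1\times q}(\T)$, its conjugate analogue, $H_1^{1\times q}(\T)\cap\overline{H_1^{1\times q}(\T)}=\C^{1\times q}$, then \eqref{eq:cnd787} and Theorem \ref{thm:A} --- coincides with the paper's.
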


\begin{proof}
Since
$(w^{-1})_{j, j}=\sum_{i=1}^q\vert (h^{-1})_{i, j}\vert^2$ for $j=1,\dots,q$,
the condition $w^{-1}\in L^{q\times q}_1(\T)$ implies $h^{-1}\in L^{q\times q}_2(\T)$.
Hence, by \cite[Theorem 3.1]{KK} and \cite[Theorem 4.23]{RR}, $h^{-1}\in H^{q\times q}_2(\T)$,
so that
\[
H_2^{1\times q}(\T)\cdot h^{-1} \subset H_1^{1\times q}(\T).
\]
Similarly, we have $(h_{\sharp})^{-1}\in H^{q\times q}_2(\T)$, and
\[
\overline{H_2^{1\times q}(\T)}\cdot (h_{\sharp}^*)^{-1} \subset
\overline{H_1^{1\times q}(\T)}.
\]
However, $\overline{H_1^{1\times q}(\T)} \cap H_1^{1\times q}(\T)=\C^{1\times q}$, whence
(\ref{eq:cnd787}). Therefore, by Theorem \ref{thm:A}, $\{X(k)\}$ satisfies (IPF).
\end{proof}

\begin{remark}
A stationary process $\{X(k)\}$ is said to be {\em minimal} if $X(0)$ cannot be interpolated precisely
using all the other values of the process.
The condition  $w^{-1}\in L^{q\times q}_1(\T)$ in Theorem 3.5 is known to be necessary and sufficient
for the minimality of a stationary process. See Section 10 of \cite[Chapter II]{R}.
\end{remark}

The next corollary gives an example 
of $\{X(k)\}$ with (IPF) for which $w^{-1}$ is not integrable (compare \cite[Proposition 3]{BJH}).

\begin{corollary}\label{cor:C}
Let $B$ be an invertible matrix in $\C^{q\times q}$. 
Then $\{X(k)\}$ with spectral density $w(e^{i\theta})=\vert 1+e^{i\theta}\vert BB^*$ satisfies {\rm (IPF)}.
\end{corollary}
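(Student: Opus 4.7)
The plan is to verify condition (\ref{eq:cnd787}) of Theorem \ref{thm:A}(2) and then invoke that theorem to conclude (IPF).

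To set up, let $\phi(z):=(1+z)^{1/2}$, using the principal branch of the square root, which is well defined and analytic on $\D$ since $\mathrm{Re}(1+z)>0$ there. Then $\phi\in H^\infty(\D)$, $\vert \phi(e^{i\theta})\vert^2=\vert 1+e^{i\theta}\vert$, and $\phi$ is outer because $\log\vert\phi(0)\vert=0=\tfrac{1}{2}\int_{-\pi}^{\pi}\log\vert 1+e^{i\theta}\vert\sigma(d\theta)$. I would then set $h(z):=\phi(z)B$ and $h_{\sharp}(z):=\phi(z)B^*$; a direct computation gives $hh^*=h_\sharp^*h_\sharp=w$, and since $\det h=(\det B)\phi^q$ and $\det h_\sharp=(\overline{\det B})\phi^q$ are nonzero constants times a scalar outer function, both are outer in $H_2^{q\times q}(\T)$. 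In particular $\log\det w=q\log\vert 1+e^{i\theta}\vert+\log\vert\det B\vert^2\in L_1(\T)$, so (A) holds and Theorem \ref{thm:A} applies.

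Next, since $\phi$ is scalar and right-multiplication by the invertible matrix $B^{-1}$ is a bijection of $H_2^{1\times q}(\T)$, one has
\[
H_2^{1\times q}(\T)\cdot h^{-1}=\phi^{-1}\cdot H_2^{1\times q}(\T),\qquad
\overline{H_2^{1\times q}(\T)}\cdot(h_\sharp^*)^{-1}=\bar\phi^{-1}\cdot\overline{H_2^{1\times q}(\T)}.
\]
Working coordinate by coordinate, condition (\ref{eq:cnd787}) therefore reduces to the scalar identity
\[
\bar\phi^{-1}\,\overline{H_2(\T)}\ \cap\ \phi^{-1}\,H_2(\T)=\C.
\]

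The main step is to prove this scalar identity. Take $f$ in the intersection and set $u:=\phi f$ and $v:=\phi\bar f$; both lie in $H_2(\T)$ (the latter because $f\in\bar\phi^{-1}\overline{H_2}$ gives $\bar\phi f\in\overline{H_2}$, hence $\phi\bar f\in H_2$). Their product $uv=\phi^2\vert f\vert^2=(1+z)\vert f\vert^2$ lies in $H_1(\T)$, so the Fourier coefficients $a_n:=\widehat{\vert f\vert^2}(n)$ satisfy $a_n+a_{n-1}=0$ for all $n\le-1$. Combined with the Riemann--Lebesgue fact $a_n\to 0$, this forces $a_n=0$ for every $n\le-1$; by reality of $\vert f\vert^2$ also $a_n=0$ for $n\ge 1$, so $\vert f\vert\equiv c$ for some constant $c\ge 0$. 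If $c=0$ then $f=0\in\C$; otherwise $u\in H_2$ and the outer function $c\phi\in H_2$ share the same modulus, so $u=c\phi\Theta$ for some inner $\Theta$, giving $f=c\Theta$. The condition $v=\bar c\,\phi\bar\Theta\in H_2$ then implies, after multiplying by the bounded $\phi$, that $(1+z)\bar\Theta\in H_2$. Expanding $\bar\Theta=\sum_{k\le 0}c_k z^k$ (with $\sum\vert c_k\vert^2<\infty$), the coefficient of $z^k$ in $(1+z)\bar\Theta$ equals $c_k+c_{k-1}$, and repeating the same alternating-plus-vanish argument yields $c_k=0$ for all $k\le-1$. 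Hence $\Theta$ is a constant of modulus one and $f=c\Theta\in\C$, completing the verification of (\ref{eq:cnd787}); Theorem \ref{thm:A} then delivers (IPF).

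The hard part is the scalar identity: one must first extract $\vert f\vert\equiv c$ from the $H_1$-membership of $(1+z)\vert f\vert^2$, and then run essentially the same alternating-Fourier-coefficient argument a second time to pin down the inner factor. The spectral factorization and the matrix-to-scalar reduction are routine once $\phi(z)=(1+z)^{1/2}$ is recognized as the natural outer factor.
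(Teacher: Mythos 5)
Your argument is correct, and it reaches (IPF) through Theorem \ref{thm:A} with the same factorization $h=(1+z)^{1/2}B$, $h_{\sharp}=(1+z)^{1/2}B^*$, but by a genuinely different route in the analytic core. The paper verifies condition (\ref{eq:cnd676}) (item (1) of Theorem \ref{thm:A}) rather than (\ref{eq:cnd787}): assuming $z^{-1}\bar f(h_{\sharp}^*)^{-1}=gh^{-1}$ with $f,g\in H_2^{1\times q}(\T)$, it uses the identity $(h_{\sharp}^*)^{-1}h=e^{i\theta/2}I_q$ and then \emph{squares} componentwise to get $e^{-i\theta}\{\overline{f_j}\}^2=\{g_j\}^2$; since $(f_j)^2,(g_j)^2\in H_1(\T)$, every Fourier coefficient of $(g_j)^2$ vanishes, so $g=0$ at once. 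The squaring trick disposes of the square-root branch and requires no factorization theory, making the proof only a few lines. You instead verify (\ref{eq:cnd787}), reduce it (correctly, since $\phi$ is scalar and $B$ is a constant invertible matrix) to the scalar identity $\bar\phi^{-1}\overline{H_2(\T)}\cap\phi^{-1}H_2(\T)=\C$, and then run a two-stage argument: the product $uv=(1+z)|f|^2\in H_1(\T)$ plus the alternating-coefficient/Riemann--Lebesgue step gives $|f|$ constant, and inner--outer factorization plus a second alternating-coefficient step shows the inner factor is constant. This is longer, but it extracts more structure from elements of the intersection and you also explicitly verify assumption (A) (via $\log\det w\in L_1(\T)$), which the paper leaves implicit; your repeated recursion-plus-decay step is also a nice elementary substitute for the paper's squaring device. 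Both proofs are sound applications of the same theorem.
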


\begin{proof}
We can take $h=(1+z)^{1/2}B$ and $h_{\sharp}=(1+z)^{1/2}B^*$. 
Suppose that there exist $f=(f_1,\dots,f_q), g=(g_1,\dots,g_q)\in H_2^{1\times q}(\T)$ such that
\[
z^{-1}\bar{f}(h_{\sharp}^*)^{-1}=gh^{-1}.
\]
Then, since $(h_{\sharp}^*)^{-1}h=e^{i\theta/2}I_q$ for $z=e^{i\theta}\ (-\pi<\theta<\pi)$, we have
\begin{equation}
e^{-i\theta}\left\{\overline{f_j(e^{i\theta}})\right\}^2=\left\{g_j(e^{i\theta})\right\}^2, 
\qquad j=1,\dots,q.
\label{eq:fg584}
\end{equation}
From $(g_j)^2\in H_1(\T)$, we get
\begin{equation}
\int_{-\pi}^{\pi}e^{im\theta} \left\{g_j(e^{i\theta})\right\}^2 \sigma(d\theta)=0
\label{eq:F-coeff223}
\end{equation}
for $m=1,2,\dots$, while, from $(f_j)^2\in H_1(\T)$ and (\ref{eq:fg584}), 
we see that (\ref{eq:F-coeff223}) also holds for $m=0,-1,\dots$, whence 
$g_j=0$ for $j=1,\dots,q$. Thus (\ref{eq:cnd676}) holds. 
Therefore, by Theorem \ref{thm:A}, $\{X(k)\}$ satisfies (IPF).
\end{proof}



\begin{thebibliography}{AAA}



\bibitem[BIK]{BIK}
N.\ H.\ Bingham, A.\ Inoue and Y.\ Kasahara,
{\em An explicit representation of Verblunsky coefficients\/},
Statist.\ Probab.\ Lett.\ {\bf 82} (2012), 403--410.

\bibitem[BJH]{BJH}
P.\ Bloomfield, N.\ P.\ Jewell and E.\ Hayashi,
{\em Characterizations of completely nondeterministic stochastic
processes\/},
Pacific J.\ Math.\ {\bf 107} (1983), 307--317.

\bibitem[H]{H}
P.\ R.\ Halmos,
{\em A Hilbert Space Problem Book\/},
Springer-Verlag, New York, 1974.


\bibitem[HL]{HL}
H.\ Helson and D.\ Lowdenslager,
{\em Prediction theory and Fourier series in several variables II\/},
Acta Math.\ {\bf 106} (1961), 175--213.

\bibitem[I1]{I1}
A.\ Inoue,
{\em Asymptotics for the partial autocorrelation function of a
stationary process\/},
J.\ Anal.\ Math.\ {\bf 81} (2000), 65--109.

\bibitem[I2]{I2}
A.\ Inoue,
{\em Asymptotic behavior for partial autocorrelation functions of
fractional ARIMA processes\/},
Ann.\ Appl.\ Probab.\ {\bf 12} (2002), 1471--1491.

\bibitem[I3]{I3}
A.\ Inoue,
{\em AR and MA representation of partial autocorrelation functions,
with applications\/},
Probab.\ Theory Related Fields\ {\bf 140} (2008), 523--551.

\bibitem[IK1]{IK1}
A.\ Inoue and Y.\ Kasahara,
{\em Partial autocorrelation functions of the fractional ARIMA
processes with negative degree of differencing\/},
J.\ Multivariate Anal.\ {\bf 89} (2004), 135--147.

\bibitem[IK2]{IK2}
A.\ Inoue and Y.\ Kasahara,
{\em Explicit representation of finite predictor coefficients and its
applications\/},
Ann.\ Statist.\ {\bf 34} (2006), 973--993.

\bibitem[KK]{KK}
V.\ E.\ Katsnelson and B.\ Kirstein,
{\em On the theory of matrix-valued functions belonging to the Smirnov class\/},
Topics in interpolation theory (Leipzig, 1994), 299--350,
Oper.\ Theory Adv.\ Appl.\ {\bf 95}, Birkh\"auser, Basel, 1997.

\bibitem[KB]{KB}
Y.\ Kasahara and N.\ H.\ Bingham,
{\em Verblunsky coefficients and Nehari sequences},
Trans.\ Amer.\ Math.\ Soc.\ {\bf 366} (2014), 1363--1378.

\bibitem[LM]{LM}
N.\ Levinson and H.\ P.\ McKean,
{\em Weighted trigonometrical approximation on $R^1$ with
application to the germ field of a stationary Gaussian noise\/},
Acta Math.\ {\bf 112} (1964), 99--143.



\bibitem[P]{P}M.\ Pourahmadi,
{\em Foundations of Time Series Analysis and Prediction Theory\/},
John Wiley, New York, 2001.



\bibitem[RR]{RR}
M.\ Rosenblum and J.\ Rovnyak,
{\em Topics in Hardy Classes and Univalent Functions\/},
Birkh\"auser Verlag, Basel-Boston-Berlin, 1994.

\bibitem[R]{R}
Y.\ A.\ Rozanov,
{\em Stationary Random Processes\/},
Holden-Day, San Francisco, 1967.

\bibitem[S]{S}
D.\ Sarason,
{\em Function Theory on the Unit Circle\/},
Notes for lectures given at a Conference at Virginia
Polytechnic Institute and State University, 1978.


\end{thebibliography}
\end{document}